\newtheorem{theorem}{Theorem}[section]
\newtheorem{corollary}[theorem]{Corollary}
\newcommand{\R}{\ensuremath{\mathbb{R}}}
\newcommand{\Z}{\ensuremath{\mathbb{Z}}}
\newcommand{\lat}{\mathcal{L}}
\renewcommand{\epsilon}{\varepsilon}
\renewcommand{\vec}[1]{\ensuremath{\boldsymbol{#1}}}
\newcommand{\basis}{\ensuremath{\mathbf{B}}}
\DeclarePairedDelimiter\inner{\langle}{\rangle}
\begin{document}
	
		\title{An improved constant in Banaszczyk's transference theorem}
		\author{
			Divesh Aggarwal\thanks{This research was partially funded by the Singapore Ministry of Education and the National Research Foundation under grant R-710-000-012-135}\\
			National University of Singapore\\
			\texttt{dcsdiva@nus.edu.sg}
			\and
			Noah Stephens-Davidowitz\\
			Massachusetts Institute of Technology\\
			\texttt{noahsd@gmail.com}
		}
		\date{}
		\maketitle
		
		\begin{abstract}
			We show that
			\[
				\mu(\lat) \lambda_1(\lat^*) < \big( 0.1275 + o(1) \big) \cdot n
				\; ,
			\]
			where $\mu(\lat)$ is the covering radius of an $n$-dimensional lattice $\lat \subset \R^n$ and $\lambda_1(\lat^*)$ is the length of the shortest non-zero vector in the dual lattice $\lat^*$. This improves on Banaszczyk's celebrated transference theorem (Math. Annal., 1993) by about 20\%.
			
			Our proof follows Banaszczyk exactly, except in one step, where we replace a Fourier-analytic bound on the discrete Gaussian mass with a slightly stronger bound based on packing. The packing-based bound that we use was already proven by Aggarwal, Dadush, Regev, and Stephens-Davidowitz (STOC, 2015) in a very different context. Our contribution is therefore simply the observation that this implies a better transference theorem.
		\end{abstract}
		
		\section{Introduction}

		 A lattice $\lat \subset \R^n$ is the set of integer linear combinations of linearly independent basis vectors $\basis = (\vec{b}_1,\ldots, \vec{b}_n)$. I.e.,
		 \[
		 \lat := \{z_1 \vec{b}_1 + \cdots + z_n \vec{b}_n \ : \ z_i \in \Z \}
		 \; .
		 \]
		 The \emph{dual} lattice $\lat^*$ is the set of vectors that have integer inner product with all elements in $\lat$. I.e.,
		 \[
		 \lat^* := \{\vec{w} \in \R^n \ : \ \forall \vec{y} \in \lat,\ \inner{\vec{w}, \vec{y}} \in \Z \}
		 \; .
		 \]
		
		A \emph{transference theorem} relates the geometry of the primal lattice $\lat$ to that of the dual lattice $\lat^*$. For example, the first minimum
		\[
		\lambda_1(\lat) := \min_{\vec{y} \in \lat_{\neq \vec0}} \|\vec{y}\|
		\; 
		\]
		is the minimal (Euclidean) norm of a non-zero lattice vector, and the \emph{covering radius} 
		\[
		\mu(\lat) := \max_{\vec{t} \in \R^n} \min_{\vec{y}\in \lat} \|\vec{y} - \vec{t}\|
		\; 
		\]
		is the maximal distance from any point in space to the lattice. Banaszczyk's celebrated transference theorem states that the covering radius of $\lat$ is rather closely related to the first minimum of the dual lattice, as follows.
		
		\begin{theorem}[\cite{BanNewBounds93}]
		\label{thm:transference_banaszczyk}
		For any lattice $\lat \subset \R^n$,
		\[
		\frac{1}{2} \leq \mu(\lat) \lambda_1(\lat^*) \leq \Big(\frac{1}{2\pi} + o(1) \Big) \cdot n
		\; .
		\]
		\end{theorem}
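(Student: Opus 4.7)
The lower bound is elementary: let $\vec{w} \in \lat^*$ achieve $\|\vec{w}\| = \lambda_1(\lat^*)$ and set $\vec{t} := \vec{w}/(2\|\vec{w}\|^2)$. For every $\vec{y}\in\lat$, $\inner{\vec{w},\vec{y}}\in\Z$, hence $|\inner{\vec{w},\vec{t}-\vec{y}}| \geq 1/2$, and Cauchy--Schwarz yields $\|\vec{t}-\vec{y}\| \geq 1/(2\lambda_1(\lat^*))$. Therefore $\mu(\lat)\lambda_1(\lat^*) \geq 1/2$.

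For the upper bound, I would follow Banaszczyk's Gaussian/Fourier strategy. Writing $\rho_s(\vec{x}) := \exp(-\pi\|\vec{x}\|^2/s^2)$ (extended additively to discrete sets), the two essential tools are the Poisson summation identity
\[
\rho_s(\lat + \vec{t}) \;=\; \frac{s^n}{\det(\lat)} \sum_{\vec{w}\in\lat^*}\rho_{1/s}(\vec{w})\cos\bigl(2\pi\inner{\vec{w},\vec{t}}\bigr),
\]
and Banaszczyk's tail-concentration lemma: for any coset $\lat'+\vec{t}'$ and any $c > 1/\sqrt{2\pi}$,
\[
\rho_s\bigl((\lat'+\vec{t}')\setminus cs\sqrt{n}\,B_n\bigr) \;\leq\; \bigl(c\sqrt{2\pi e}\,e^{-\pi c^2}\bigr)^n \rho_s(\lat').
\]
The base $c\sqrt{2\pi e}\,e^{-\pi c^2}$ equals $1$ precisely at $c=1/\sqrt{2\pi}$ and strictly decreases beyond, which is where the constant $1/(2\pi)$ in the statement ultimately comes from.

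The plan is to combine these in three stages. First, pick $s$ so that $s\lambda_1(\lat^*)=(1/\sqrt{2\pi}+\delta)\sqrt{n}$; applying tail-concentration to $\lat^*$ at parameter $1/s$ (using that the rescaled dual $s\lat^*$ has minimum $\geq (1/\sqrt{2\pi}+\delta)\sqrt{n}$, so no nonzero point lies in the ball of radius $(1/\sqrt{2\pi}+\delta)\sqrt{n}$) forces $\rho_{1/s}(\lat^*\setminus\{\vec{0}\}) = 2^{-\Omega_\delta(n)}$. Second, Poisson summation then gives $\rho_s(\lat+\vec{t}) \geq (1-2^{-\Omega_\delta(n)})\cdot s^n/\det(\lat)$ for every $\vec{t}$. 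Third, applying tail-concentration to $\lat+\vec{t}$ at parameter $s$ with constant $1/\sqrt{2\pi}+\delta'$, its mass outside the ball of radius $(1/\sqrt{2\pi}+\delta')s\sqrt{n}$ centered at $\vec{0}$ is at most $2^{-\Omega_{\delta'}(n)}\cdot\rho_s(\lat)$, which is strictly less than $\rho_s(\lat+\vec{t})$ by the previous step. Hence $\lat+\vec{t}$ contains a point of norm at most $(1/\sqrt{2\pi}+\delta')s\sqrt{n}$, i.e., $\dist(\vec{t},\lat) \leq (1/\sqrt{2\pi}+\delta')(1/\sqrt{2\pi}+\delta)\,n/\lambda_1(\lat^*)$. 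Taking $\delta,\delta' = o(1)$ as $n\to\infty$ yields the theorem.

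The hard part is the tail-concentration lemma itself, which is the genuinely clever step of Banaszczyk's argument: its proof exponentially reweights the Gaussian and applies Poisson summation a second time to convert the tail mass into a rescaled Gaussian mass bound, from which the factor $(c\sqrt{2\pi e}\,e^{-\pi c^2})^n$ emerges. Once this lemma is in hand, the rest of the upper bound is parameter selection and routine bookkeeping, and the matching of the same threshold $c = 1/\sqrt{2\pi}$ on both the dual (smoothing) and primal (tail) sides is what produces the product $1/(2\pi)$.
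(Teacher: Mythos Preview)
Your proposal is correct and follows essentially the same route as the paper's presentation of Banaszczyk's proof: apply the tail-concentration lemma once to the dual (so that $\rho_{1/s}(\lat^*\setminus\{\vec0\})$ is small, i.e., smoothing holds), use Poisson summation to lower-bound $\rho_s(\lat+\vec t)$ uniformly in $\vec t$, and apply the tail bound again on the primal side to force a lattice point near $\vec t$. The paper packages the first two stages via the smoothing parameter $\eta(\lat)$ (the $s$ at which $\rho_{1/s}(\lat^*)=3/2$) and records the intermediate inequalities $\mu(\lat)<C_n\sqrt n\,\eta(\lat)$ and $\eta(\lat)\le C_n\sqrt n/\lambda_1(\lat^*)$, whereas you choose $s$ directly; this is purely an organizational difference, and your identification of the matched threshold $c=1/\sqrt{2\pi}$ on both sides as the source of the constant $1/(2\pi)$ is exactly the point the paper makes.
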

	
		(Here and elsewhere, we write $o(1)$ for an unspecified function that approaches zero as $n$ grows. Banaszczyk actually formally proved a slightly weaker bound, but he noted at the end of his paper that his proof yields Theorem~\ref{thm:transference_banaszczyk}. See, e.g.,~\cite{MSKissingNumbers19}.)
	
		We are interested in the upper bound in Theorem~\ref{thm:transference_banaszczyk}, and we include the simple lower bound only for completeness. I.e., we are interested in the quantity
		\[
		T_n := \frac{1}{n} \cdot \sup_{\lat \subset \R^n} \mu(\lat) \lambda_1(\lat^*)
		\; ,
		\]
		where the supremum is taken over all lattices in $n$ dimensions. Theorem~\ref{thm:transference_banaszczyk} shows that $T_n < 1/(2\pi) + o(1) \approx 0.159$, 
		and it is known that
		\begin{equation}
			\label{eq:random_lattice}
		 	T_n > \frac{1}{2\pi e} - o(1) \approx 0.059 
			\; ,
		\end{equation}
		so that $T_n$ is known up to a constant factor.
		(Eq.~\eqref{eq:random_lattice} follows, e.g., from~\cite{SieMeanValue45}.)
		
		Our main result is the following refinement of Theorem~\ref{thm:better_transference}.
		
		\begin{theorem}
			\label{thm:better_transference}
			For any lattice $\lat \subset \R^n$, we have
			\[
			\frac{1}{2} \leq \mu(\lat) \lambda_1(\lat^*) < \big( 0.1275 + o(1) \big) \cdot n
			\]
			I.e.,
			\[
			T_n < 0.1275 + o(1)
			\; .
			\]
		\end{theorem}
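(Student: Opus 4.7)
The plan is to reproduce Banaszczyk's proof of Theorem~\ref{thm:transference_banaszczyk} line-by-line and change exactly one ingredient: where Banaszczyk controls a discrete Gaussian mass by a Fourier-analytic (Poisson-summation) estimate, I substitute the tighter packing-based estimate of Aggarwal, Dadush, Regev, and Stephens-Davidowitz.

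First, I would set up the standard Banaszczyk sandwich. After rescaling so that $\lambda_1(\lat^*)=1$, fix a point $\vec{t}\in\R^n$ realizing $\dist(\vec{t},\lat)=\mu(\lat)$, and write $\rho_s(A)=\sum_{\vec{a}\in A}e^{-\pi\|\vec{a}\|^2/s^2}$ for a Gaussian width $s$ to be tuned. The transference bound is squeezed between two inequalities relating $\rho_s(\lat+\vec{t})$ to $\rho_s(\lat)$. On one side, Poisson summation combined with $\lambda_1(\lat^*)=1$ gives the smoothing lower bound $\rho_s(\lat+\vec{t})\geq(1-o(1))\,\rho_s(\lat)$ once $s\gg\sqrt{n}$. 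On the other side, Banaszczyk splits $\rho_s(\lat+\vec{t})$ over $c\sqrt{n}\,s\,B_2^n$ and its complement: since every vector in $\lat+\vec{t}$ has norm at least $\mu(\lat)$, the contribution from the ball is at most $|(\lat+\vec{t})\cap c\sqrt{n}\,s\,B_2^n|\cdot e^{-\pi\mu(\lat)^2/s^2}$, while the tail outside the ball is bounded by Banaszczyk's Fourier-analytic lemma by $(c\sqrt{2\pi e}\,e^{-\pi c^2})^n\rho_s(\lat)$. Comparing the two sides and optimizing jointly over $c$ and $s$ recovers the constant $1/(2\pi)$.

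Second, I would substitute the Aggarwal--Dadush--Regev--Stephens-Davidowitz packing estimate for Banaszczyk's Fourier-analytic mass bound at exactly this step. Their inequality is proved by packing balls around lattice points and comparing Gaussian integrals, which exploits lattice geometry more tightly than Poisson summation; the effect is a strictly smaller exponential coefficient in the controlling tail estimate. Plugging this inequality into the same sandwich---without altering any other step---yields a refined two-variable optimization.

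The main obstacle is this final optimization. The conceptual substitution is immediate, but one still has to verify that the ADRS bound applies in the parameter regime relevant here (in particular with $s=\Theta(\sqrt{n})$ and $c$ slightly larger than $1/\sqrt{2\pi}$, which is precisely where Banaszczyk's own lemma becomes uninformative) and then solve the resulting two-variable optimization to extract the explicit constant $0.1275+o(1)$.
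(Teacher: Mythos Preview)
Your proposal has the right headline---one keeps Banaszczyk's argument and swaps a single Gaussian-mass estimate for the ADRS packing bound---but you have placed the substitution on the wrong side of the duality, and that is where the improvement actually lives.

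The paper's proof is cleanly factored through the smoothing parameter $\eta(\lat)$: (i) Banaszczyk's tail bound on the coset $\lat-\vec{t}$ at parameter $s\ge\eta(\lat)$ gives $\mu(\lat)<\big(1/\sqrt{2\pi}+o(1)\big)\sqrt{n}\,\eta(\lat)$; (ii) a bound on $\rho_{1/s}(\lat^*)$ gives $\eta(\lat)\le(\text{const})\sqrt{n}/\lambda_1(\lat^*)$. Step (i) is essentially tight---at $s\ge\eta(\lat)$ the discrete Gaussian really is concentrated near radius $\sqrt{n/(2\pi)}\,s$---so there is no room for improvement there, and the ADRS inequality (which bounds the \emph{total} mass $\rho_s(\lat)$ of an unshifted lattice via $\lambda_1(\lat)$) does not even apply to a coset tail. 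The gain comes entirely in step (ii): here one needs $\rho_{1/s}(\lat^*)\le 3/2$, i.e.\ a bound on the full mass of the \emph{dual} lattice at a \emph{small} parameter, and this is exactly the regime where the packing bound $\rho_s(\lat^*)<1+\big(2^{2\beta_n+o(1)}s^2 n/(2\pi e\lambda_1(\lat^*)^2)\big)^{n/2}$ beats Poisson summation. This yields $\eta(\lat)<\big(2^{\beta_n}/\sqrt{2\pi e}+o(1)\big)\sqrt{n}/\lambda_1(\lat^*)$, and multiplying by the unchanged constant from step (i) gives $2^{\beta_n}/(2\pi\sqrt{e})<0.1275+o(1)$ via the Kabatjanski{\u\i}--Leven{\v s}te{\u\i}n bound $\beta_n<0.401+o(1)$.

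Two concrete symptoms of the misplacement in your outline: first, your ball--tail decomposition of $\rho_s(\lat+\vec{t})$ introduces a point-count $|(\lat+\vec{t})\cap c\sqrt{n}\,s\,B_2^n|$ that the actual proof never needs and that you do not control; second, your remark that Banaszczyk's lemma ``becomes uninformative'' for $c$ slightly above $1/\sqrt{2\pi}$ is backwards---that is precisely where the tail bound bites and is sharp. There is no two-variable optimization to do; once the substitution is made in step~(ii), the constant drops out immediately.
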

		
		Theorem~\ref{thm:better_transference} is a roughly 20\% improvement over Banaszczyk's Theorem~\ref{thm:transference_banaszczyk}, but still rather far from matching the lower bound in Eq.~\eqref{eq:random_lattice}. In fact, we prove a potentially stronger bound of
		\[
		T_n < \frac{2^{\beta_n }}{2\pi \sqrt{e}} + o(1)
		\; ,
		\]
		where $\beta_n$ is a certain geometric quantity known to satisfy
		\[
		0.0219 - o(1) < \beta_n < 0.401 + o(1)
		\; .
		\]
		See Eq.~\eqref{eq:beta}.
		
		\section{Banaszczyk's original proof}
		
		Like Banaszczyk's original proof, our proof of Theorem~\ref{thm:better_transference} works by studying the Gaussian mass
		\[
		\rho_{s, r}(\lat - \vec{t}) := \sum_{\stackrel{\vec{y} \in \lat}{\|\vec{y} - \vec{t}\| \geq r}} \exp(-\pi \|\vec{y} - \vec{t}\|^2/s^2)
		\]
		for a lattice $\lat \subset \R^n$, parameter $s > 0$, shift vector $\vec{t}\in \R^n$, and radius $r \geq 0$. When $r = 0$, we simply write $\rho_s(\lat - \vec{t})$. In particular, notice that the covering radius $\mu(\lat)$ is the maximal radius $r > 0$  such that $\rho_{s,r}(\lat - \vec{t}) = \rho_{s}(\lat - \vec{t})$ for some $\vec{t}\in \R^n$. To obtain a bound $\mu(\lat) < r$, it therefore suffices to prove that	
		\[
		\rho_{s,r}(\lat - \vec{t}) < \rho_{s}(\lat - \vec{t})
		\]
		for some parameter $s > 0$ and all $\vec{t}\in \R^n$.
		
		To that end, using the language and notation of~\cite{MRWorstcaseAveragecase07}, we define the \emph{smoothing parameter} $\eta = \eta(\lat) > 0$ to be the unique parameter satisfying $\rho_{1/\eta}(\lat^*) = 3/2$.\footnote{There is nothing particularly special about the constant $3/2$ in this definition. Any constant strictly between $1$ and $2$ would suffice for our purposes, though our choice of constant gives a slightly cleaner proof.}
		Using the Poisson Summation Formula, Banaszczyk showed that
		\begin{equation}
		\label{eq:smooth}
		\rho_{s}(\lat)/3 < \rho_{s}(\lat - \vec{t}) \leq \rho_s(\lat)
		\; 
		\end{equation}
		for any $s \geq \eta(\lat)$ and $\vec{t} \in \R^n$.
		
		So, for such a parameter $s \geq \eta(\lat)$ and a suitable radius $r > 0$, we wish to show that $\rho_{s,r}(\lat - \vec{t}) \leq \rho_s(\lat)/3$ for all $\vec{t}\in \R^n$. Intuitively, we expect this to be true when $r$ is large relative to $s$. Indeed, Banaszczyk's celebrated tail bound says exactly this. Using the Poisson Summation Formula again, he showed that
		\begin{equation}
			\label{eq:banaszczyk_tail}
			\rho_{s,r}(\lat - \vec{t}) \leq \rho_s(\lat)/3
		\end{equation}
		for $r \geq  C_n\sqrt{n} \cdot s $ where  $C_n = 1/\sqrt{2\pi}+o(1)$. (Banaszczyk actually proved a more general bound that holds for all $r \geq \sqrt{n/(2\pi)} \cdot s$, but we will only need this special case.) Therefore,
		\begin{equation}
		\label{eq:mu_eta_intro}
		\mu(\lat) < C_n \sqrt{n} \cdot \eta(\lat)
		\; .
		\end{equation}
		
		We note that the continuous Gaussian with parameter $s$ has mass concentrated in a thin shell of radius roughly $C_n \sqrt{n} s$. For sufficiently large $s$, the discrete Gaussian mass $\rho_s(\lat - \vec{t})$ is similarly concentrated. In particular, Eq.~\eqref{eq:banaszczyk_tail} is tight up to a constant when $s \geq \eta(\lat)$. Therefore, it seems difficult (though perhaps not impossible) to improve upon this step in Banaszczyk's proof.\footnote{The authors do not know of an example where Eq.~\eqref{eq:mu_eta_intro} is tight. So, it is conceivable that one could improve Eq.~\eqref{eq:mu_eta_intro} substantially without improving on Eq.~\eqref{eq:banaszczyk_tail} much. This seems to require a very fine understanding of the behavior of the discrete Gaussian at small radii.}
		
		The last step in the proof (as presented here) is where we will diverge from Banaszczyk, but it will still be instructive to complete Banaszczyk's original proof. 
		To do so, Banaszczyk applied his tail bound once more to bound $\eta(\lat)$ in terms of $1/\lambda_1(\lat^*)$. In particular, notice that $\rho_{1/s}(\lat^*) = 1 + \rho_{1/s, \lambda_1(\lat^*)}(\lat^*)$. Therefore, if $s \geq C_n \sqrt{n}/\lambda_1(\lat^*)$, Eq.~\eqref{eq:banaszczyk_tail} implies that $\rho_{1/s}(\lat^*) \leq 1 + \rho_{1/s}(\lat^*)/3$. Rearranging gives $\rho_{1/s}(\lat^*) \leq 3/2$, i.e.,
		\begin{equation}
			\label{eq:eta_lambda_intro}
			\eta(\lat) \leq C_n \sqrt{n}/\lambda_1(\lat^*)
			\; .
		\end{equation}
		Combining Eqs.~\eqref{eq:mu_eta_intro} and~\eqref{eq:eta_lambda_intro} yields Theorem~\ref{thm:transference_banaszczyk}, $\mu(\lat) \lambda_1(\lat^*) \leq C_n^2 \cdot n$.
		
		While Banaszczyk's tail bound Eq.~\eqref{eq:banaszczyk_tail} is quite tight when the parameter $s $ is sufficiently large, $s \geq \eta(\lat)$, it is not necessarily tight for smaller parameters. Indeed, in the last step above, we specifically chose such a small parameter that nearly all of the Gaussian mass is concentrated on $\vec0$. For such small parameters, Eq.~\eqref{eq:banaszczyk_tail} is in fact loose, as we will show in the next section. By improving on the tail bound in this special case, we will improve Eq.~\eqref{eq:eta_lambda_intro}, thus obtaining the better transference theorem in Theorem~\ref{thm:better_transference}.
		
		\section{Proof of Theorem~\ref{thm:better_transference}}
		
		For a lattice $\lat \subset \R^n$ and $\alpha \geq 1$, let 
		\[
			N_\alpha(\lat) := |\{ \vec{y}\in \lat \ : \ 0 < \|\vec{y}\| \leq \alpha \lambda_1(\lat) \}|
		\]
		be the number of non-zero lattice points inside a ball of radius $\alpha \lambda_1(\lat)$. 
		E.g., $N_1(\lat)$ is the kissing number of $\lat$, the number of shortest non-zero vectors.
		
		Intuitively, for large $\alpha$, we expect $N_\alpha(\lat)$ to be proportional to the volume of the ball of radius $\alpha \lambda_1(\lat)$, and therefore to be proportional to $\alpha^n$. Indeed, for a random lattice $\lat \subset \R^n$ under the Haar measure, $N_\alpha(\lat)$ is concentrated closely around $\alpha^n$. (See~\cite{SieMeanValue45}.)
		It is therefore natural to define
		\begin{equation}
		\label{eq:beta}
		\beta_n := \frac{1}{n}\cdot \log \sup_{\stackrel{\lat \subset \R^n}{\alpha \geq 1}}  \frac{N_\alpha(\lat)}{\alpha^n} 
		\; ,
		\end{equation}
		where by convention we take the logarithm base two (here and below).
		Notice that $\beta_n$ measures how much this volume heuristic can underestimate $N_\alpha$. (Until recently, it was not even clear whether $\beta_n$ is bounded away from zero. But, Vlăduţ recently proved the existence of lattices with exponentially large kissing number, which implies that $\beta_n$ is in fact bounded below by some constant. Specifically, $\beta_n > 0.0219 - o(1)$~\cite{VlaLatticesExponentially19}.) 
		
		Upper bounds on $\beta_n$ and $N_\alpha$ are quite well studied. For example, Eq.~\eqref{eq:banaszczyk_tail} implies that $\log N_1(\lat) < (\log(e)/2+o(1))\cdot  n$, and the more general tail bound in~\cite{BanNewBounds93} implies that $\beta_n < \log (e)/2 + o(1)$. Indeed, Banaszczyk's original transference theorem essentially follows from this bound.
		
		However, the best asymptotic upper bound known is due to Kabatjanski{\u\i} and Leven{\v{s}}te{\u\i}n~\cite{KLBoundsPackings78}.\footnote{Kabatjanski{\u\i} and Leven{\v{s}}te{\u\i}n formally only showed a bound on $N_1(\lat)$, but this can easily be extended to a bound on $\beta_n$. See~\cite[Lemma 3]{PSSolvingShortest09}.} In particular, they show that
		\begin{equation}
			\label{eq:KL}
			\beta_n < 0.401 + o(1)
			\; .
		\end{equation}
		We simply observe that such a bound on $\beta_n$ yields improvements to Eq.~\eqref{eq:eta_lambda_intro}. In fact, the following theorem already appeared in~\cite{ADRSSolvingShortest15} in a very different context. At the time, we did not recognize the relevance to transference.
		
		\begin{theorem}[{\cite[Lemma 4.2]{ADRSSolvingShortest15}}]
			\label{thm:KL_mass}
			For any lattice $\lat \subset \R^n$ and any parameter $s > 0$,
			\[
			\rho_s(\lat) < 1 + \Big( \frac{2^{2\beta_n + o(1)}s^2 n}{2\pi  e \lambda_1(\lat)^2}\Big)^{n/2}  
			\; .
			\]
		\end{theorem}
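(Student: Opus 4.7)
My plan is to separate the contribution of $\vec y = \vec 0$, which accounts for the ``$1$'' on the right-hand side, and then bound the Gaussian mass of the nonzero lattice vectors by grouping them according to their norm. The key input is the definition of $\beta_n$ in \eqref{eq:beta}, which immediately tells us that for the counting function $N(r) := |\{\vec y \in \lat \setminus \{\vec 0\} : \|\vec y\| \leq r\}|$, we have $N(r) = 0$ for $r < \lambda_1(\lat)$ and
\[
N(r) \leq 2^{\beta_n n} \bigl(r/\lambda_1(\lat)\bigr)^n \qquad \text{for all } r \geq \lambda_1(\lat).
\]

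To turn this combinatorial bound into an analytic one, I would rewrite the Gaussian sum as a Stieltjes integral against $N$, obtaining (after integrating by parts, the boundary terms vanishing because $N$ grows polynomially while the Gaussian decays super-exponentially)
\[
\sum_{\vec{y} \in \lat \setminus \{\vec 0\}} e^{-\pi\|\vec y\|^2/s^2} \;=\; \int_0^\infty \frac{2\pi r}{s^2}\, e^{-\pi r^2/s^2}\, N(r)\,dr.
\]
Plugging in the bound on $N(r)$, extending the lower limit from $\lambda_1(\lat)$ to $0$ (which only enlarges the integral because the integrand is nonnegative), and substituting $u = \pi r^2/s^2$, I would reduce the computation to $\int_0^\infty u^{n/2} e^{-u}\,du = \Gamma(n/2+1)$, yielding
\[
\rho_s(\lat) - 1 \;\leq\; 2^{\beta_n n} \cdot \frac{s^n}{\pi^{n/2}\,\lambda_1(\lat)^n} \cdot \Gamma(n/2+1).
\]

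Finally I would apply Stirling's formula, $\Gamma(n/2+1) = (n/(2e))^{n/2} \cdot \poly(n)$, so that after rearranging, the polynomial factor gets absorbed into the $o(1)$ slack in the exponent, giving exactly the stated bound. The only substantive obstacle in this proof is the packing bound encoded in $\beta_n$: once one has the definition \eqref{eq:beta} together with the Kabatjanski\u{\i}--Leven\v{s}te\u{\i}n estimate \eqref{eq:KL}, the rest reduces to a routine one-variable integral and Stirling's formula. In that sense, the entire content of the theorem is the observation that the combinatorial count $N_\alpha(\lat)$ controls the discrete Gaussian mass in a clean, tight way at small $s$, where Banaszczyk's Fourier-analytic tail bound is wasteful.
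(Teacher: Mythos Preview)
Your proof is correct and essentially identical to the paper's: both separate off the $\vec 0$ term, rewrite the remaining sum as an integral against the counting function (the paper uses the variable $\alpha = r/\lambda_1(\lat)$ instead of $r$, a trivial reparametrization), apply the bound $N_\alpha(\lat) \le 2^{\beta_n n}\alpha^n$, extend the lower limit to $0$, evaluate the resulting integral as $\Gamma(n/2+1)$, and absorb the Stirling correction into the $o(1)$.
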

		\begin{proof}
			We have
			\begin{align*}
			\rho_s(\lat) 
			&= 1 +  \frac{2\pi \lambda_1(\lat)^2}{s^2} \cdot \int_{1}^\infty N_\alpha(\lat) \cdot \alpha \exp(-\pi \alpha^2 \lambda_1(\lat)^2/s^2) {\rm d} \alpha\\
			&\leq 1 + \frac{2\pi \lambda_1(\lat)^2 }{s^2} \cdot 2^{\beta_n n} \cdot \int_{1}^\infty \alpha^{n+1} \cdot \exp(-\pi \alpha^2 \lambda_1(\lat)^2/s^2) {\rm d} \alpha\\
			&< 1 + \frac{2\pi \lambda_1(\lat)^2 }{s^2} \cdot 2^{\beta_n n} \cdot \int_{0}^\infty \alpha^{n+1} \cdot \exp(-\pi \alpha^2 \lambda_1(\lat)^2/s^2) {\rm d} \alpha\\
			&= 1 + \Big( \frac{2^{2\beta_n}s^2}{\pi \lambda_1(\lat)^2}\Big)^{n/2} \cdot \Gamma(n/2 + 1)\\
			&< 1 + \Big( \frac{2^{2\beta_n + o(1)}s^2 n}{2\pi  e \lambda_1(\lat)^2}\Big)^{n/2}  
			\; ,
			\end{align*}
			as needed.
		\end{proof}
	
	\begin{corollary}
		For any lattice $\lat \subset \R^n$,
			\begin{equation}
		\label{eq:better_lambda_1}
		\eta(\lat) < \Big(\frac{2^{\beta_n}}{\sqrt{2\pi e}} + o(1) \Big)\cdot  \frac{\sqrt{n}}{\lambda_1(\lat^*)} < \Big(\frac{2^{0.401}}{\sqrt{2\pi e}} + o(1) \Big)\cdot  \frac{\sqrt{n}}{\lambda_1(\lat^*)}
		\; .
		\end{equation}
	\end{corollary}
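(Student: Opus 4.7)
The plan is to apply Theorem~\ref{thm:KL_mass} directly to the dual lattice $\lat^*$ with parameter $1/s$, and then solve for the value of $s$ at which the resulting upper bound on $\rho_{1/s}(\lat^*)$ is at most $3/2$. By the definition of the smoothing parameter, this value of $s$ then bounds $\eta(\lat)$ from above, yielding the desired inequality.

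Concretely, I would first substitute $\lat \mapsto \lat^*$ and $s \mapsto 1/s$ into Theorem~\ref{thm:KL_mass} to obtain
\[
\rho_{1/s}(\lat^*) < 1 + \Big( \frac{2^{2\beta_n + o(1)} n}{2\pi e s^2 \lambda_1(\lat^*)^2}\Big)^{n/2}.
\]
Then I would impose $\rho_{1/s}(\lat^*) \leq 3/2$, which amounts to requiring that the $(n/2)$-th power term be at most $1/2$. Taking $(2/n)$-th roots (so that the factor $(1/2)^{2/n} = 1 - o(1)$ is absorbed into the $o(1)$ in the exponent of $2^{\beta_n}$), this becomes
\[
s^2 \lambda_1(\lat^*)^2 \geq \big(1+o(1)\big) \cdot \frac{2^{2\beta_n} \cdot n}{2\pi e},
\]
so that any $s \geq \big(\tfrac{2^{\beta_n}}{\sqrt{2\pi e}} + o(1)\big) \cdot \sqrt{n}/\lambda_1(\lat^*)$ satisfies $\rho_{1/s}(\lat^*) \leq 3/2$. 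Since $\eta(\lat)$ is characterized by the equation $\rho_{1/\eta}(\lat^*) = 3/2$ and $\rho_{1/s}(\lat^*)$ is decreasing in $s$, this immediately gives the first inequality. The second inequality follows by plugging in the Kabatjanski{\u\i}--Leven{\v{s}}te{\u\i}n bound $\beta_n < 0.401 + o(1)$ from Eq.~\eqref{eq:KL}.

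There is no real obstacle here: Theorem~\ref{thm:KL_mass} does essentially all the work, and the corollary is a one-line algebraic rearrangement once one chooses to apply the theorem on the dual side with parameter $1/s$. The only minor care required is in tracking the $o(1)$ terms carefully, in particular verifying that the factor $(1/2)^{2/n} = 1 - o(1)$ coming from the exponentiation step can be safely folded into the $o(1)$ correction in the final bound.
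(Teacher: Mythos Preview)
Your proposal is correct and follows exactly the same approach as the paper's proof: apply Theorem~\ref{thm:KL_mass} to $\lat^*$ with parameter $1/s$, choose $s$ so that the resulting bound gives $\rho_{1/s}(\lat^*) < 3/2$, and conclude $\eta(\lat) < s$. You have simply spelled out the algebra (including the handling of the $(1/2)^{2/n}$ factor and the monotonicity of $\rho_{1/s}(\lat^*)$ in $s$) that the paper compresses into a single sentence.
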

\begin{proof}
	Taking $s > \sqrt{n/(2\pi e)} \cdot 2^{\beta_n + o(1)} /\lambda_1(\lat^*)$ in Theorem~\ref{thm:KL_mass} yields $\rho_{1/s}(\lat^*) < 3/2$. I.e., $\eta(\lat) < s$, as needed.
\end{proof}

	Theorem~\ref{thm:better_transference} then follows by combining Eqs.~\eqref{eq:mu_eta_intro} and~\eqref{eq:better_lambda_1}.

		\bibliographystyle{alpha}

\end{document}